\documentclass[a4paper]{article}

\usepackage{amssymb,amsfonts,amsmath,amsthm}
\usepackage[utf8]{inputenc}
\usepackage[babel=true,kerning,spacing]{microtype}
\usepackage{hyperref}
\usepackage{graphicx}
\usepackage{indentfirst}

\usepackage[american]{babel}

\usepackage[abbreviate=false]{biblatex}

\bibliography{document}

\DeclareFieldFormat{eprint:arxiv}{arXiv.org\addcolon{} \href{http://arxiv.org/abs/#1}{\nolinkurl{#1}}}
\DeclareFieldFormat{eprint:inria}{HAL-INRIA\addcolon{} \href{http://hal.inria.fr/inria-#1}{\nolinkurl{#1}}}
\DeclareFieldFormat{eprint:iacr}{IACR ePrint\addcolon{} \href{http://eprint.iacr.org/#1}{\nolinkurl{#1}}}

\usepackage[all]{xy}

\newtheorem{theorem}{Theorem}[section]
\newtheorem{proposition}[theorem]{Proposition}

\newtheorem{lemma}[theorem]{Lemma}
\newtheorem{algorithm}[theorem]{Algorithm}
\newtheorem{definition}[theorem]{Definition}
\newtheorem*{hypothesis}{Hypothesis}

\newenvironment{algo}{\begin{algorithm}~\nopagebreak

\noindent\begin{tabular}{rl}}{\end{tabular}\hfill\end{algorithm}}

\def\A{\mathcal A}
\def\B{\mathcal B}

\def\H{\mathcal H}

\def\O{\mathcal O}
\def\P{\mathcal P}

\def\FF{\mathbb F}
\def\NN{\mathbb N}
\def\QQ{\mathbb Q}

\def\ZZ{\mathbb Z}
\def\aaa{\mathfrak a}

\def\fff{\mathfrak f}

\def\ppp{\mathfrak p}

\def\rrr{\mathfrak r}
\def\sss{\mathfrak s}

\def\BBB{\mathfrak B}
\def\CCC{\mathfrak C}

\DeclareMathOperator{\disc}{disc}

\DeclareMathOperator{\norm}{N}

\DeclareMathOperator{\val}{val}
\DeclareMathOperator{\End}{End}
\DeclareMathOperator{\Gal}{Gal}

\DeclareMathOperator{\Pic}{Pic}


\title{Computing endomorphism rings of \\ abelian varieties of dimension two}

\author{Gaetan Bisson}
\date{\small University of French Polynesia}

\begin{document}

\maketitle

\begin{abstract}

Generalizing a method of Sutherland and the author for elliptic curves
\cite{endomorphism,grh-only} we design a subexponential algorithm for computing
the endomorphism rings of ordinary abelian varieties of dimension two over
finite fields. Although its correctness and complexity analysis rest on several
assumptions, we report on practical computations showing that it performs very
well and can easily handle previously intractable cases.

\paragraph{Note.} Some results of this paper previously appeared in the
author's thesis \cite{thesis}.

\end{abstract}

\section{Introduction}

Let $\A$ be an absolutely simple abelian variety of dimension $g$ defined over
a field with $q$ elements; its Frobenius endomorphism $\pi$ admits a
characteristic polynomial $\chi_\pi\in\ZZ[t]$ of which the $2g$ complex roots
have absolute value $\sqrt{q}$. Tate \cite{tate} shows that $\chi_\pi$ not only
encodes the cardinality of $\A$ over extension fields but also uniquely
identifies its isogeny class; Pila \cite{pila} later made this seminal result
effective by establishing that $\chi_\pi$ can be computed using polynomially
many elementary operations in $\log(q)$.

For principally polarized abelian varieties, the computation of isogenies can
also be done efficiently \cite{lubicz-robert} and is particularly relevant to
number theory and cryptography \cite{galbraith-isogenies}. In the generic case
where $\A$ is ordinary, the endomorphisms of $\A$ form a discrete subring of
maximal rank, \emph{an order}, $\End(\A)$ of $\QQ(\pi)$ that is stable under
complex conjugation and unchanged by base field extensions; this order
$\End(\A)$ is a finer invariant than $\chi_\pi$ better suited to
isogeny-related problems such as \cite{drew-hilbert}.

Its computation was first addressed by Kohel who obtained an exponential-time
method for ordinary elliptic curves \cite{kohel-phd}. This method was recently
improved by Sutherland and the author \cite{endomorphism} yielding an algorithm
of subexponential complexity under heuristic assumptions that were later shown
to follow from the generalized Riemann hypothesis \cite{grh-only}. While
Kohel's approach does not extend to dimension $g>1$
\cite[Example~8.3]{broker-gruenewald-lauter}, other exponential methods exist
for arbitrary $g$, namely those of Eisenträger and Lauter
\cite{eisentrager-lauter} and of Wagner \cite{wagner}.

This paper generalizes the techniques of \cite{endomorphism,grh-only} to
absolutely simple, ordinary, principally polarized abelian varieties of
dimension $g=2$ and obtains the first subexponential algorithm for computing
their endomorphism rings; its asymptotic complexity is
\[
L(q)^{g^2\sqrt{3}/2+o(1)}
\qquad\text{where}\qquad
L(q)=\exp\sqrt{\log(q)\cdot\log\log(q)}.
\]
Both its correctness and complexity bound rest on heuristic assumptions besides
the generalized Riemann hypothesis, and require the exclusion of a zero-density
set of worst-case varieties. Nevertheless, we find that it performs very well
on examples of moderate size which were previously intractable. Although most
of the techniques developped here apply to abelian varieties of arbitrary
dimension, we focus our analysis on the case $g=2$ which is of most interest to
cryptography.

Section~\ref{sec:cm} discusses the connection between isogenies and
endomorphisms. Sections~\ref{sec:isog} and \ref{sec:relat} then describe how to
compute isogenies and endomorphisms that allow us to exploit this connection,
and Section~\ref{sec:compare} puts this together into an algorithm for
comparing candidate rings; Section~\ref{sec:lattice} then explains how to
identify a lattice, from which the endomorphism ring is eventually recovered in
Section~\ref{sec:orders}. Finally, Section~\ref{sec:comp} reports on practical
computations.

\section{Isogenies and Endomorphism Rings}
\label{sec:cm}

We assume some familiarity with abelian varieties, isogenies, and endomorphism
rings; we refer to \cite[Chapter~V]{cornell-silverman} for background material
and to \cite{shimura-taniyama} for complex multiplication.

\bigskip

Again, consider an absolutely simple, ordinary, principally polarized abelian
variety $\A$ of dimension $g$ over a field with $q$ elements, and fix an
isomorphism of its endomorphism algebra $\QQ(\pi)=\QQ\otimes\End(\A)$ with a
number field $K$; this field is called the \emph{complex multiplication field}
of $\A$ and is a totally imaginary quadratic extension of a totally real number
field $K_0$ of degree $g$. Waterhouse \cite{waterhouse} shows that the
endomorphism rings of abelian varieties isogenous to $\A$ are exactly those
orders of $K$ stable under complex conjugation that contain
$\ZZ[\pi,\overline\pi]$, where $\overline\pi=q/\pi$; they form a finite lattice
(in the set-theoretic sense) with supremum the ring of integers $\O_K$.

Following Fouquet and Morain \cite{fouquet-morain}, we say that an isogeny
$\phi:\A\to\B$ is \emph{horizontal} when $\End(\A)$ and $\End(\B)$ are the same
order in $K$, and \emph{vertical} otherwise. In a sense, horizontal isogenies
are the prevalent case:

\begin{lemma}\label{lem:jump}
If $\phi:\A\to\B$ is an isogeny with kernel isomorphic to $(\ZZ/\ell\ZZ)^g$,
the index $[\End(\A)+\End(\B):\End(\A)\cap\End(\B)]$, which we call the
\emph{distance} between the orders $\End(\A)$ and $\End(\B)$, is a divisor of
$\ell^{2g-1}$.
\end{lemma}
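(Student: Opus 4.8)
The plan is to run the whole argument inside the CM field $K$, via the canonical identifications of the endomorphism algebras of $\A$ and $\B$ with $K$ (both matching the respective Frobenius endomorphisms with the distinguished element $\pi$), and to measure the displacement of the orders $\End(\A)$ and $\End(\B)$ induced by $\phi$.

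First I would produce a complementary isogeny. Since $\ker\phi$ is annihilated by $\ell$, it lies in $\A[\ell]=\ker[\ell]_\A$, so $[\ell]_\A$ factors through $\phi$: there is an isogeny $\psi:\B\to\A$ with $\psi\circ\phi=[\ell]_\A$, and cancelling the epimorphism $\phi$ from $\phi\circ\psi\circ\phi=[\ell]_\B\circ\phi$ yields $\phi\circ\psi=[\ell]_\B$ as well. Because $\phi$ is defined over the field with $q$ elements it intertwines the Frobenius endomorphisms, $\phi\circ\pi_\A=\pi_\B\circ\phi$, so the isomorphism $\alpha\mapsto\phi\alpha\phi^{-1}$ between the endomorphism algebras (with $\phi$ invertible after tensoring with $\QQ$) sends $\pi_\A$ to $\pi_\B$, hence is the identity under the canonical identifications with $K=\QQ(\pi)$. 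Therefore, for $\alpha\in\End(\A)$, the honest endomorphism $\phi\circ\alpha\circ\psi$ of $\B$ equals $\ell\alpha$ in $K$ (using $\psi=\ell\phi^{-1}$), so $\ell\End(\A)\subseteq\End(\B)$; the symmetric computation with $\psi\circ\beta\circ\phi$ gives $\ell\End(\B)\subseteq\End(\A)$.

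Next I would package this. Set $\O_\cap=\End(\A)\cap\End(\B)$ and $\O_+=\End(\A)+\End(\B)$; since $\ZZ[\pi,\overline\pi]\subseteq\O_\cap$ and $\O_+\subseteq\O_K$, both are $\ZZ$-lattices of full rank $2g=[K:\QQ]$ inside $\O_K$. The two inclusions just obtained give $\ell\,\O_+=\ell\End(\A)+\ell\End(\B)\subseteq\O_\cap$, whence $\ell\,\O_+\subseteq\O_\cap\subseteq\O_+$, so $\O_+/\O_\cap$ is a quotient of $\O_+/\ell\,\O_+\cong(\ZZ/\ell\ZZ)^{2g}$; in particular the distance $[\O_+:\O_\cap]$ equals $\ell^{d}$ for some $0\le d\le 2g$.

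It remains to exclude $d=2g$. If that held, then comparing with $[\O_+:\ell\,\O_+]=\ell^{2g}$ would force $\O_\cap=\ell\,\O_+$; but $1\in\O_\cap$ would then give $1/\ell\in\O_+\subseteq\O_K$, impossible since $\ell$ is a rational prime. Hence $d\le 2g-1$ and $[\O_+:\O_\cap]$ divides $\ell^{2g-1}$. The bookkeeping with $\phi$ and $\psi$ is routine; the one load-bearing point — and the only place the naive bound $\ell^{2g}$ is improved to $\ell^{2g-1}$ — is the presence of $1$ in $\O_+$, which is what prevents $\O_\cap$ from collapsing all the way down to $\ell\,\O_+$.
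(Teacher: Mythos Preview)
Your proof is correct. Both your argument and the paper's start from the inclusions $\ell\End(\A)\subset\End(\B)$ and $\ell\End(\B)\subset\End(\A)$, and both exploit the presence of $\ZZ$ inside the orders to improve the naive bound $\ell^{2g}$ to $\ell^{2g-1}$, but the executions differ. The paper builds a six-term lattice diagram with the auxiliary lattices $\ZZ+\ell\End(\A)$, $\ZZ+\ell\End(\B)$, their sum, and their intersection, then applies the identity $[\O:\ZZ+\ell\O]=\ell^{2g-1}$ three times together with multiplicativity of indices to derive the exact formula $[\O_+:\O_\cap]=\ell^{2g-1}/[\O_\cap:\ZZ+\ell\O_+]$. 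Your route is more direct: from $\ell\O_+\subset\O_\cap$ you obtain $[\O_+:\O_\cap]\mid\ell^{2g}$, and then the single observation $1\in\O_\cap\setminus\ell\O_+$ rules out $\O_\cap=\ell\O_+$, forcing the exponent down to at most $2g-1$. Your argument is shorter and avoids the diagram; the paper's version, in exchange, yields a slightly finer equality rather than just divisibility.
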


\begin{proof}
Since $\phi$ splits the multiplication-by-$\ell$ map, we have
$\ell\End(\A)\subset\End(\B)$ and, the latter being an order, we further have
$\ZZ+\ell\End(\A)\subset\End(\B)$; we thus obtain the lattice of
Figure~\ref{fig:distance}.
\begin{figure}
\[\xymatrix@C=-42pt{
 & \End(\A)+\End(\B) \ar@{-}[dl]_{a} \ar@{-}[dr]^{b} & \\
\End(\A) \ar@{-}[dr]_{b} & & \End(\B) \ar@{-}[dl]^{a} \\
 & \End(\A)\cap\End(\B) \ar@{-}[d]^c & \\
 & \ZZ+\ell\End(\A)+\ell\End(\B) \ar@{-}[dl]_{d} \ar@{-}[dr]^{e} & \\
\ZZ+\ell\End(\A) \ar@{-}[dr]_{e} & & \ZZ+\ell\End(\B) \ar@{-}[dl]^{d} \\
 & \ZZ+\ell\End(\A)\cap\ell\End(\B)
}\]
\caption{Lattice of orders for an isogeny $\A\to\B$ of kernel $(\ZZ/\ell\ZZ)^g$.}
\label{fig:distance}
\end{figure}
As they are indices of the form $[\O:\ZZ+\ell\O]$,
the products $bcd$, $ace$, and $cde$ are all equal to $\ell^{2g-1}$ which
implies $bcd\cdot ace/cde=\ell^{2g-1}$ and hence $ab=\ell^{2g-1}/c$.
\end{proof}

Since the distance between endomorphism rings of isogenous abelian varieties
divides the index $[\O_K:\ZZ[\pi,\overline\pi]]$, vertical isogenies only exist
for finitely many primes $\ell$. On the other hand, horizontal isogenies occur
for a positive density of primes $\ell$ and their graph structure is relatively
well understood: as we will see, they form a Cayley graph for the following
group.

\begin{definition}\label{def:ccc}
For any order $\O$ in a complex multiplication field $K$, denote by $I_\O$ the
group consisting of all pairs $(\aaa,\rho)$ satisfying
$\aaa\overline\aaa=\rho\O$ with $\aaa$ an invertible fractional ideal of $\O$
and $\rho$ a totally positive element of $K_0$, endowed with component-wise
multiplication; also, let $P_\O$ be its subgroup formed by pairs of the form
$(\mu\O,\mu\overline\mu)$ for $\mu\in K$. The quotient group $I_\O/P_\O$ is
called the \emph{polarized class group} of $\O$ and is denoted by $\CCC(\O)$.
\end{definition}

Note that this group is unchanged if we additionally require that $\aaa$ (and
$\mu$) be coprime to a fixed integer $\nu$; to allow us to compare ideals of
$I_\O$ as $\O$ varies, we will from now on only consider class representatives
of this type with $\nu=\disc(\ZZ[\pi,\overline\pi])$. For maximal orders, the
theorem below shows that such elements correspond to horizontal isogenies.

\def\stuff{\cite[§14]{shimura-taniyama}}
\begin{theorem}[\stuff]\label{th:shimura}
When $\End(\A)$ is maximal, one can associate a horizontal isogeny of degree
$\norm_{K/\QQ}(\aaa)$ to every $(\aaa,\rho)\in I_{\End(\A)}$ with $\aaa$
coprime to the characteristic, so as to induce a free action of
$\CCC(\End(\A))$ on the isogeny class of $\A$ up to isomorphisms.
\end{theorem}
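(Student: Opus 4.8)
The plan is to build the action through the complex-analytic description of complex multiplication and then carry it over to $\FF_q$ by reduction, as in \cite[\S14]{shimura-taniyama}. First I would fix a CM type $\Phi$ of $K$ and, after lifting $\A$ to a CM abelian variety in characteristic zero, write $\A\cong\CC^g/\Phi(\bbb)$ for an invertible fractional $\O_K$-ideal $\bbb$, its principal polarisation being the Riemann form $(x,y)\mapsto\tr_{K/\QQ}(\xi x\overline y)$ for some $\xi\in K$ that is totally imaginary, $\Phi$-positive, and satisfies $\xi\,\bbb\,\overline\bbb=\mathfrak d_{K/\QQ}^{-1}$ (the inverse different), the last condition being exactly what makes the polarisation principal.

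Next I would define the map. Given $(\aaa,\rho)\in I_{\O_K}$ with $\aaa$ prime to the characteristic $p$, I may replace it by an equivalent representative in its $P_{\O_K}$-coset to assume $\aaa$ integral (this preserves $\rho$ being totally positive). Put $\A_\aaa:=\CC^g/\Phi(\aaa^{-1}\bbb)$. The inclusion $\Phi(\bbb)\subseteq\Phi(\aaa^{-1}\bbb)$ produces an isogeny $\phi_\aaa\colon\A\to\A_\aaa$ whose kernel $\aaa^{-1}\bbb/\bbb$ is étale of order $\norm_{K/\QQ}(\aaa)$; as this kernel is cut out by $\aaa\subseteq\O_K=\End_{\FF_q}(\A)$, it is already rational over $\FF_q$, so $\phi_\aaa$ reduces to an isogeny of degree $\norm_{K/\QQ}(\aaa)$ there. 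Since $\aaa^{-1}\bbb$ is again an invertible $\O_K$-ideal, its ring of multipliers $\{x\in K:x\,\aaa^{-1}\bbb\subseteq\aaa^{-1}\bbb\}$ is $\O_K$, hence $\End(\A_\aaa)=\O_K=\End(\A)$ and $\phi_\aaa$ is horizontal. Finally I would endow $\A_\aaa$ with the Riemann form of $\rho\,\xi$: because $\rho$ is a totally positive element of $K_0$, the element $\rho\,\xi$ is again totally imaginary and $\Phi$-positive, and from $\aaa\,\overline\aaa=\rho\,\O_K$ one computes
\[
(\rho\,\xi)\cdot(\aaa^{-1}\bbb)\cdot\overline{(\aaa^{-1}\bbb)}
=\rho\,\xi\,\bbb\,\overline\bbb\cdot(\aaa\,\overline\aaa)^{-1}
=\xi\,\bbb\,\overline\bbb
=\mathfrak d_{K/\QQ}^{-1},
\]
so the transported polarisation is again principal and $(\A_\aaa,\rho\,\xi)$ is a principally polarised abelian variety.

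It then remains to check that this assignment induces a free action of $\CCC(\O_K)$ up to isomorphism. Compatibility with the group law of $I_{\O_K}$ is clear from $(\aaa\aaa')^{-1}\bbb=\aaa'^{-1}(\aaa^{-1}\bbb)$ and $\rho'(\rho\,\xi)=(\rho\rho')\,\xi$, and $(\O_K,1)$ acts by the identity. For $(\mu\O_K,\mu\overline\mu)\in P_{\O_K}$, multiplication by $\mu$ identifies $\CC^g/\Phi(\mu^{-1}\bbb)$ with $\CC^g/\Phi(\bbb)$ and pulls the Riemann form of $\xi$ back to that of $\mu\overline\mu\,\xi$, so it is an isomorphism of principally polarised abelian varieties; thus the action factors through $\CCC(\O_K)$. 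Conversely, an isomorphism $\A_\aaa\xrightarrow{\sim}\A$ of principally polarised abelian varieties corresponds, over $\CC$, to multiplication by some $\mu\in K^{\times}$ with $\mu\,\aaa^{-1}\bbb=\bbb$ and $\mu\overline\mu\,\xi=\rho\,\xi$, i.e. $\aaa=\mu\O_K$ and $\rho=\mu\overline\mu$, so that $(\aaa,\rho)\in P_{\O_K}$; hence the action is free. (Replacing $\mu$ by $\mu u$ for a unit $u$ of norm one, necessarily a root of unity, does not change the resulting element of $P_{\O_K}$, so the identification is well-posed.)

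The step I expect to be the main obstacle is the polarisation bookkeeping: getting the normalisation right so that the transported form on $\A_\aaa$ is again a \emph{principal} polarisation — this is precisely what forces the relation $\aaa\,\overline\aaa=\rho\,\O_K$ — and so that positivity is controlled by $\rho$ being totally positive; dually, one must verify that the stabiliser of $(\bbb,\xi)$ is no larger than $P_{\O_K}$, which is where the finiteness of the units of norm one enters. A secondary technical point is that reduction modulo $p$ must faithfully transport the analytically defined isogenies and isomorphisms to $\FF_q$; this is exactly why $\aaa$ is required to be prime to the characteristic, so that $\ker\phi_\aaa$ is étale, and it could alternatively be handled by arguing throughout via Deligne's equivalence for ordinary abelian varieties over finite fields.
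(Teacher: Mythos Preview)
The paper does not supply its own proof of this statement: it is quoted directly from \cite[\S14]{shimura-taniyama}, and the surrounding text only remarks that the non-polarised analogue holds for arbitrary orders and records the extension to non-maximal orders as Hypothesis~(A). Your sketch is a faithful outline of the classical complex-analytic construction from that reference --- lattice quotients $\CC^g/\Phi(\aaa^{-1}\bbb)$, transport of the Riemann form by $\rho$, and the identification of the stabiliser with $P_{\O_K}$ --- and is essentially correct; the only point worth tightening is the passage between characteristic zero and $\FF_q$, where you should invoke explicitly that for ordinary abelian varieties the reduction functor is fully faithful (Serre--Tate or Deligne), so that both the isogenies you construct and the isomorphisms needed for freeness descend and lift as required.
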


The corresponding result for non-polarized abelian varieties, which sees the
polarized class group replaced by the classical ideal class group, also holds
for arbitrary orders \cites[§7]{shimura-taniyama}[§7]{waterhouse}. While only
the case of maximal endomorphism rings has been treated in the literature, the
above theorem is expected to hold for arbitrary orders too, and we assume that
it does as one of our heuristics in this paper.

\begin{hypothesis}[A]
Theorem~\ref{th:shimura} holds even if $\End(\A)$ is not maximal.
\end{hypothesis}

\section{Evaluating Isogenies}
\label{sec:isog}

Until recently, isogenies could only be efficiently evaluated for elliptic
curves \cite{velu} and special families in higher dimension \cite{bost-mestre}.
The work of Lubicz and Robert \cite{lubicz-robert} has made it possible to
efficiently compute general, polarization-preserving isogenies in dimension
$g>1$; more precisely, we have:

\def\stuff{\cite[Theorem~1.2]{cosset-robert}}
\begin{proposition}[\stuff]\label{prop:isog}
Let $\H$ be a rational isotropic subgroup isomorphic to $(\ZZ/\ell\ZZ)^g$ of an
abelian variety of dimension $g=2$ with $\ell$ a prime different from the
characteristic. The separable isogeny with kernel $\H$ can be evaluated with a
worst-case complexity of $\ell^{3g+o(1)}$ operations in the base field.
\end{proposition}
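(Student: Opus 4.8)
The plan is to follow the theta-function framework of Lubicz and Robert \cite{lubicz-robert}. First I would equip the principally polarized abelian surface $A$ with a symmetric theta structure of some fixed small level $n\ge 2$ coprime to $\ell$ and to the characteristic; for $n=4$ this embeds $A$ into a projective space with a constant number $n^g$ of theta coordinates, and for $n=2$ it embeds the Kummer surface, after which a bounded number of square-root extractions (or a change to level $n\ell$) recovers what is needed. In these coordinates a scalar multiplication or a differential (pseudo-)addition of points costs $O(1)$ operations in the field of definition, a constant independent of $\ell$. Since $\H$ is isotropic of order $\ell^g$ inside $A[\ell]$, which has order $\ell^{2g}$ and carries the perfect symplectic Weil pairing attached to the polarization, $\H$ is in fact a \emph{maximal} isotropic subgroup; hence $\ell\ne\operatorname{char}$ makes the isogeny $\phi:A\to B=A/\H$ separable, $B$ inherits a principal polarization and a level-$n$ theta structure, and one may choose a complementary maximal isotropic $\H'$ so that $A[\ell]=\H\oplus\H'$ is a symplectic decomposition with an adapted basis.

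The heart of the matter is the isogeny formula of Lubicz and Robert: once $A$ is given a level-$n\ell$ theta structure adapted to the decomposition $A[\ell]=\H\oplus\H'$ --- equivalently, once one knows the level-$n$ theta coordinates, in a compatible projective normalization, of the $\ell^g$ translates $P+t$ of a point $P$ by $t\in\H$ --- the level-$n$ theta coordinates of $\phi(P)$ on $B$, and also the theta null point of $B$ itself (run the same computation with $P$ in the origin's coset), are explicit Riemann-type linear combinations of these, with coefficients depending only on the chosen theta structure. Granting this, evaluating $\phi$ at $P$ reduces to: (i) computing from a basis of $\H$ the theta coordinates of all $\ell^g$ of its points in a normalization for which the group law is respected --- this is the delicate bookkeeping step, as one must fix affine lifts of the coordinates of the basis points compatibly, which in level $2$ is exactly where the square roots enter; (ii) likewise computing the $\ell^g$ translates $P+t$; and (iii) applying the formula, a bounded number of linear combinations ranging over the $\ell^g$ kernel points. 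Enumerating $(\ZZ/\ell\ZZ)^g$ by differential-addition chains needs care in genus $2$, since a pseudo-addition of $Q_1,Q_2$ requires $Q_1-Q_2$ and the two directions of the grid cannot be walked independently; but a row-by-row scheme that precomputes the offset passing from one row to the next traverses the whole kernel in $\ell^{g+o(1)}$ pseudo-operations.

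It remains to bound the cost of these $\ell^{g+o(1)}$ field operations, and this is where the worst case enters. Although $\H$ is rational, i.e. $\Gal(\bar k/k)$-stable, by hypothesis, its individual points --- hence the complement $\H'$ and the level-$n\ell$ structure --- are in general only defined over an extension $k'$ of the base field $k$; since $\Gal(\bar k/k)$ acts on $\H\cong(\ZZ/\ell\ZZ)^g$ through a subgroup of $\operatorname{Aut}(\H)=\GL_g(\ZZ/\ell\ZZ)$, we have $[k':k]\le\#\GL_g(\ZZ/\ell\ZZ)\le\ell^{g^2}$, and one arithmetic operation over $k'$ costs $\ell^{g^2+o(1)}$ operations over $k$ with fast arithmetic. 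The auxiliary square roots contribute a further $\ell^{o(1)}$ only, and one checks that the resulting theta null point of $B$ and the value $\phi(P)$ for $k$-rational $P$ are $\Gal(\bar k/k)$-invariant and so descend to $k$ for free. Multiplying the $\ell^{g+o(1)}$ operations by the $\ell^{g^2+o(1)}$ cost of each gives $\ell^{g^2+g+o(1)}$ operations in $k$; since $g^2+g=3g$ precisely when $g=2$, this is the claimed $\ell^{3g+o(1)}$. I expect the main obstacle to be making step (i) watertight --- the compatible normalization of theta coordinates across the whole kernel, and the exact identification of which level and which translates the isogeny formula consumes --- together with confirming that the $\GL_g(\ZZ/\ell\ZZ)$ bound on $[k':k]$ is genuinely the worst that can occur; the remaining accounting is careful but routine.
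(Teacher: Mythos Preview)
The paper does not prove this proposition at all: it is quoted as \cite[Theorem~1.2]{cosset-robert} and used as a black box, so there is no ``paper's own proof'' to compare against. Your sketch is a fair reconstruction of the argument behind the cited result: the Lubicz--Robert/Cosset--Robert isogeny formula expresses the level-$n$ theta null point of $A/\H$ (and the image of a point) as a sum over the $\ell^g$ elements of $\H$, each summand costing $O(1)$ operations in the field where the kernel points live; since $\H$ is rational, Galois acts on it through $\GL_g(\ZZ/\ell\ZZ)$, so that field has degree at most $\#\GL_g(\ZZ/\ell\ZZ)=\ell^{g^2+o(1)}$ over the base; and $\ell^{g}\cdot\ell^{g^2+o(1)}=\ell^{3g+o(1)}$ exactly when $g=2$.

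Two small points you should tighten before calling this a proof. First, you invoke a complementary isotropic $\H'$ and a level-$n\ell$ structure; if you genuinely needed $\H'$ you would be working in $A[\ell]$ with Galois acting through (a subgroup of) $\mathrm{GSp}_{2g}(\ZZ/\ell\ZZ)$, which is larger than $\GL_g$ and would wreck the exponent --- the Cosset--Robert formulas are arranged so that only the translates by $\H$ itself enter, and your parenthetical ``equivalently'' is doing real work here that should be justified rather than asserted. Second, for the bound to be a \emph{worst-case} bound and not merely an upper bound, you would want to exhibit (or cite) situations where the image of Galois in $\GL_g(\ZZ/\ell\ZZ)$ has order $\ell^{g^2+o(1)}$; otherwise the $3g$ is just a ceiling. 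Neither of these is fatal, but both belong in a complete write-up.
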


The isogenies we are concerned with correspond to elements $(\aaa,\ell)$ of
$\CCC(\ZZ[\pi,\overline\pi])$; to evaluate them, we first need to identify the
kernel $\H$ corresponding to a given $(\aaa,\ell)$. Note that we can write
$\aaa=\ell\O+f(\pi)\O$ for some factor $f$ of $\chi_\pi\bmod\ell$. Now we take
$\H$ to be the subgroup of $\A[\ell]$ on which the Frobenius acts with
characteristic polynomial $f$; it is unique since we restrict to ideals $\aaa$
coprime to $\nu=\disc(\ZZ[\pi,\overline\pi])$. In effect, this identification
fixes an isomorphism between $\QQ\otimes\End(\A)$ and the complex
multiplication field $K$ (mapping a fixed root of $\chi_\pi$ to the Frobenius
endomorphism) as was required in Section~\ref{sec:cm}, and it only matters that
this be done consistently within a given isogeny class.

Points of $\H$ are defined over an extension field whose degree is the
multiplicative order of $x$ in $\ZZ[x]/(f)/(\ell)$, that is, at most
$\norm_{K/\QQ}(\aaa)-1$. Over that extension, an algorithm of Couveignes
\cite[§8]{couveignes-torsion} may be used to compute the $\ell$-torsion
subgroup of $\A$ assuming that points of $\A$ can be drawn uniformly at random.
This can be done efficiently for Jacobian varieties by using the underlying
curve.

\bigskip

The algorithm implicitly referred to by this Theorem returns a representative
of the isogenous isomorphism class $\A/\H$ defined over the field of definition
of individual points of $\H$, even if $\H$ itself and therefore $\A/\H$ are
rational. For abelian varieties of dimension $g=2$ represented as Jacobians of
genus-two curves, one can use a method of Mestre \cite{mestre} to find, after
each isogeny evaluation step, a representative of the isomorphism class $\A/\H$
defined over the minimal field. See \cite{cosset-robert,avisogenies} for
details.

Abelian varieties of arbitrary dimension may be represented by theta constants.
The points on such varieties are the roots of the Riemann equations, and can
therefore be drawn quasi-uniformly at random by solving this system using a
Gröbner basis algorithm, once enough variables are specialized to random values
so that its rank is full. Isomorphism classes of abelian varieties correspond
to orbits of theta constants under the action of the symplectic group, and can
thus also be identified efficiently. From now on, we will however focus on the
case where $g=2$ and hence restrict to abelian varieties given as Jacobians of
hyperelliptic curves.

\section{Generating Short Relations}
\label{sec:relat}

First recall a well-known elementary result about (polarized) class groups.

\begin{lemma}\label{lem:subccc}
For any two orders $\O\subset\O'$ containing $\ZZ[\pi,\overline\pi]$, the map
$(\aaa,\rho)\in I_\O\to(\aaa\O',\rho)\in I_{\O'}$ induces a natural morphism of
polarized class groups $\CCC(\O)\to\CCC(\O')$; this morphism is surjective when
restricted and corestricted to elements satisfying $\rho\in\QQ$.
\end{lemma}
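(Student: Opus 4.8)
The first part I would treat as a formality. If $\aaa$ is an invertible fractional ideal of $\O$ then so is $\aaa\O'$ of $\O'$; complex conjugation stabilises $\O$ and $\O'$ and commutes with the extension of ideals, so $\overline{\aaa\O'}=\overline\aaa\,\O'$ and therefore $(\aaa\O')\overline{(\aaa\O')}=(\aaa\overline\aaa)\O'=\rho\O'$, which together with the unaffected total positivity of $\rho$ shows $(\aaa\O',\rho)\in I_{\O'}$. The assignment is evidently multiplicative and sends the generator $(\mu\O,\mu\overline\mu)$ of $P_\O$ to $(\mu\O',\mu\overline\mu)\in P_{\O'}$, hence descends to a natural morphism $\CCC(\O)\to\CCC(\O')$; as it leaves the second component alone, it carries classes with $\rho\in\QQ$ into classes with $\rho\in\QQ$.

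For surjectivity I would follow the standard pattern for ring class groups, reducing to ideals prime to the conductor $\fff$ of $\O$ in $\O'$: from $\ZZ[\pi,\overline\pi]\subseteq\O\subseteq\O'\subseteq\O_K$ one gets $[\O':\O]\mid[\O_K:\ZZ[\pi,\overline\pi]]$, whose square divides $\nu=\disc(\ZZ[\pi,\overline\pi])$, so $\nu\O'\subseteq\O$ and an ideal prime to $\nu$ is prime to $\fff$. Given a class of $\CCC(\O')$ with a representative $(\bbb_0,s)$ having $s\in\QQ$, the remark after Definition~\ref{def:ccc} provides an equivalent pair $(\bbb,s')$ with $\bbb$ an integral $\O'$-ideal prime to $\fff$, say $\bbb=\lambda\bbb_0$ and $s'=\lambda\overline\lambda s$ for some $\lambda\in K^\times$. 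The classical multiplication- and conjugation-preserving bijection between ideals prime to $\fff$ on the two sides sends $\bbb$ to the invertible $\O$-ideal $\aaa:=\bbb\cap\O$, with $\aaa\O'=\bbb$ and $\aaa\overline\aaa$ equal to the pullback of $\bbb\overline\bbb=s'\O'$; the crux, discussed below, is that this pullback is a principal $\O$-ideal with a totally positive generator, i.e.\ that $(\aaa,s')\in I_\O$. Granting that, I would conclude by an "untwist": $(\aaa,s')$ is equivalent in $\CCC(\O)$ to $(\lambda^{-1}\aaa,\lambda^{-1}\overline\lambda^{-1}s')=(\lambda^{-1}\aaa,s)$, which again lies in $I_\O$ since $(\lambda^{-1}\aaa)\overline{(\lambda^{-1}\aaa)}=\lambda^{-1}\overline\lambda^{-1}s'\O=s\O$, has rational second component $s$, and maps to $(\lambda^{-1}\bbb,s)=(\bbb_0,s)$ — thus hitting the class we began with.

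The step to be careful about, and the reason the statement is restricted to $\rho\in\QQ$, is the bracketed one: the conductor dictionary respects multiplication but not principality, so in general the pullback of a principal $\O'$-ideal is only principal over $\O'$, and the unrestricted morphism $\CCC(\O)\to\CCC(\O')$ need not be surjective. Rationality of the polarization is exactly what rescues the argument: after the reduction the relation one must realise over $\O$ has the shape $\aaa\overline\aaa=c\O$ with $c$ governed by the integer $s$ rather than by an arbitrary conjugation-stable ideal, and for such a relation solubility over $\O$ and over $\O'$ agree by the dictionary. Making this fully rigorous comes down to a routine but mildly delicate diagram chase through the class groups of $\O$, $\O'$, $\O\cap K_0$ and $\O'\cap K_0$, with the finitely many primes dividing both $s$ and $\fff$ as the only genuinely fiddly case; everything else is book-keeping, the sole ingredient beyond the classical class-group argument being the untwist by $\lambda^{-1}$ that restores rationality of the polarization.
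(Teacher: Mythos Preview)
The paper does not actually prove Lemma~\ref{lem:subccc}: it is introduced as ``a well-known elementary result'' and left without argument, so there is no proof of the paper's to compare against. Your write-up therefore has to be judged on its own.

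The first paragraph is clean and complete: extension of invertible ideals, compatibility with complex conjugation, and the check that $P_\O$ maps to $P_{\O'}$ are exactly what is needed, and you correctly note that the second component is untouched so the $\rho\in\QQ$ condition is preserved.

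For surjectivity your overall architecture --- replace $\bbb_0$ by an equivalent $\bbb$ prime to the conductor, contract to $\aaa=\bbb\cap\O$ via the standard dictionary, then untwist by $\lambda^{-1}$ to recover a rational second component and hit $(\bbb_0,s)$ on the nose --- is the right one, and the untwist step is carried out correctly. However, your treatment of the ``crux'' is not quite coherent. After passing to $\bbb=\lambda\bbb_0$ the second component becomes $s'=\lambda\overline\lambda\,s\in K_0$, which is in general \emph{not} rational; so when you write that ``the relation one must realise over $\O$ has the shape $\aaa\overline\aaa=c\O$ with $c$ governed by the integer $s$'' this is misleading: at that stage the generator you need is $s'$, not $s$, and rationality has been temporarily lost. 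What actually has to be checked is that the contraction of $s'\O'$ to $\O$ is $s'\O$ (equivalently, that $\aaa\overline\aaa$ is principal in $\O$ with generator $s'$), and this is precisely the point where the conductor dictionary alone is insufficient and one must use that $s'\O'$ is a conjugation-invariant ideal coming from $K_0$. Your final paragraph gestures at the right ingredients (comparing class groups of $\O,\O',\O\cap K_0,\O'\cap K_0$) but the sentence explaining \emph{why} rationality rescues the argument does not match what you actually need to prove. In short: right strategy, honestly flagged gap, but the justification you offer for closing that gap is circular as written.
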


The above result allows us to compare the polarized class group $\CCC(\O)$ as
the order $\O$ varies. More explicitly, we use \emph{relations} to characterize
polarized class groups.

\begin{definition}
We call a \emph{relation} any tuple $(\alpha_1,\ldots,\alpha_k)$ of elements of
$\CCC(\ZZ[\pi,\overline\pi])$. We say that this relation \emph{holds in $\O$}
if the product $\alpha_1\cdots\alpha_k$ is trivial in $\CCC(\O)$ through the
map of the above lemma, and that it \emph{holds in $\A$} if the corresponding
isogeny chain $\phi_{\alpha_1}\circ\cdots\circ\phi_{\alpha_k}$ maps $\A$ to an
isomorphic abelian variety.
\end{definition}

By Theorem~\ref{th:shimura}, if every relation that holds in $\O$ also does in
$\A$, the group $\CCC(\End(\A))$ must be a quotient of $\CCC(\O)$, and we will
later see that this implies that $\O\subset\End(\A)$ locally at almost all
prime.

The computation of class groups of algebraic orders is a classical topic that
has led to the development of advanced algorithms for generating ideal
relations. However, to effectively compare $\CCC(\O)$ with $\CCC(\End(\A))$ we
require that the isogenies corresponding to the relations we select be
computable in reasonable time; this places two additional constraints:
\begin{itemize}
\item Elements $(\alpha_i)$ of our relations must correspond to maximal isotropic isogenies.
\item Their number $k$ and norms $(\norm_{K/\QQ}(\alpha_i))$ must be bounded.
\end{itemize}
The latter constraint is already addressed in \cite[§6]{grh-only} whose results
and proofs carry directly over to orders in CM-fields of arbitrary degree. Let
us now explain how to additionally satisfy the former.

\bigskip

Let $\Phi$ be a type for $K$, that is, a set of representatives for embeddings
of $K$ into its normal closure $K^c$ up to complex conjugation. Its \emph{type
norm}
\[
\norm_\Phi:x\longmapsto\prod_{\phi\in\Phi}\phi(x)
\]
maps $K$ to its reflex field $K^r$, the fixed field of
$\{\sigma\in\Gal(K^c/\QQ):\sigma\Phi=\Phi\}$, and induces a morphism taking
ideals $\aaa$ of $K$ to elements $(\norm_\Phi(\aaa),\norm_{K/\QQ}(\aaa))$ of
$\CCC(\O^r)$ for any order $\O^r$ of $K^r$ with discriminant coprime to $\nu$.
Types of absolutely simple abelian varieties are primitive, which implies that
${K^r}^r=K$; hence the type norm of the reflex type $\Phi^r$, the restriction
to $K^r$ of inverses of automorphisms of $K^c$ induced by $\Phi$, or
\emph{reflex type norm}, maps ideals of $K^r$ to $\CCC(\O)$ for any order $\O$
containing $\ZZ[\pi,\overline\pi]$. See Figure~\ref{fig:typenorm}.

\begin{figure}
\begin{center}
\[\xymatrix{
 & K^c \ar@{<-_{)}}[dl]_{\Phi} \ar@{<-^{)}}[dr]^{\Phi^r} & \\
K \ar@{-}[d] \ar@{-->}@/^5pt/[rr]^{\norm_\Phi} & & K^r \ar@{-}[d] \ar@{-->}@/^5pt/[ll]^{\norm_{\Phi^r}} \\
K_0 \ar@{-}[dr] & & K^r_0 \ar@{-}[dl] \\
 & \QQ & \\
}\]
\caption{The complex multiplication field, its reflex field, and type norm maps.}
\label{fig:typenorm}
\end{center}
\end{figure}

Images through $\norm_{\Phi^r}$ of prime ideals of $K^r$ are polarized ideals
of which the corresponding isogenies can be computed using
Proposition~\ref{prop:isog}. Therefore, to obtain relations that hold in a
given order $\O$ and whose corresponding isogenies can be efficiently
evaluated, we first generate tuples of ideals $(\aaa_i)$ of $\O$ whose product
is principal using the method of Buchmann \cite{buchmann} as modified in
\cite[§6]{grh-only}, and then take its image through
$\norm_{\Phi^r}\circ\norm_\Phi$; the total norm of the resulting relation is
$\sum_i\norm_{K/\QQ}(\aaa_i)^{g^2}$. Formally, we obtain:

\begin{algo}\label{alg:relat}
   \textsc{Input:} & An order $\O$ and a parameter $\gamma>0$.
\\ \textsc{Output:} & A relation holding in $\O$ of which the corresponding isogeny can be computed efficiently.
\smallskip
\\ 1. & Form the set $\BBB$ of prime ideals $\ppp$ of $\O$ with norm less than $L(\disc(\O))^\gamma$.
\\ 2. & Draw a vector $x\in\ZZ^\BBB$ uniformly at random with coordinates
\\    & $|x_\ppp|<\log(\disc(\O))^{4+\epsilon}$ when $\norm_{K/\QQ}(\ppp)<\log(\disc(\O))^{2+\epsilon}$ and $x_\ppp=0$ otherwise.
\\ 3. & Compute the reduced ideal representative $\aaa$ of $\prod \ppp^{x_\ppp}$.
\\ 4. & If $\aaa$ factors over $\BBB$ as $\prod \ppp^{y_\ppp}$:
\\ 5. & \qquad Return the relation containing $\norm_{\Phi^r}(\norm_\Phi(\ppp))$ with multiplicity $x_\ppp-y_\ppp$ for $\ppp\in\BBB$.
\\ 6. & Go back to Step~2.
\end{algo}

For details on Step~4, and more generally on computing ideal relations in
number fields, we refer to \cite{cohen-diaz-olivier}. All steps above have
previously been analyzed except for the evaluation of
$\norm_{\Phi^r}\circ\norm_\Phi$ which only uses polynomial time; we therefore
borrow the assumptions and complexity bound of \cite{buchmann} for
Algorithm~\ref{alg:relat}:

\begin{hypothesis}[B]
The generalized Riemann hypothesis holds, and reduced ideals have the
smoothness properties of integers of comparable size.
\end{hypothesis}

\begin{proposition}\label{prop:relat}
Under Hypothesis (B), this algorithm generates a relation with total norm
$L(\disc(\O))^{g^2\gamma+o(1)}$ in expected time
$L(\disc(\O))^{\gamma+o(1)}+L(\disc(\O))^{1/(4\gamma)+o(1)}$.
\end{proposition}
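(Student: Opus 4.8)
The plan is to track through Algorithm~\ref{alg:relat} step by step, charging each step its cost and invoking Hypothesis~(B) exactly where Buchmann's analysis does. First I would handle Step~1: by the GRH-conditional prime ideal theorem, the factor base $\BBB$ has size $L(\disc(\O))^{\gamma+o(1)}$, and it can be enumerated in time proportional to its size up to polynomial factors. Next, for one pass through Steps~2--4: drawing the random vector $x$ is cheap since only the $O(\log(\disc(\O))^{4+\epsilon})$-many small-norm primes get nonzero exponents, each with a polylogarithmic bound, so the ideal $\prod\ppp^{x_\ppp}$ has norm bounded by $\exp(\log(\disc(\O))^{O(1)})$; computing its reduced representative $\aaa$ costs polynomial time in $\log(\disc(\O))$ using standard lattice reduction on the ideal. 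The reduced ideal $\aaa$ has norm at most $\sqrt{|\disc(\O)|}$, hence its prime factors are all of size $L(\disc(\O))^{1+o(1)}$.

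The crux — and the step I expect to be the main obstacle — is the success probability of Step~4, i.e.\ the probability that the reduced ideal $\aaa$ factors completely over $\BBB$. This is precisely where Hypothesis~(B) enters: assuming reduced ideals of norm $N$ are $L(\disc(\O))^\gamma$-smooth with the same probability as random integers of size $N$, the Canfield--Erdős--Pomerance estimate gives a smoothness probability of $u^{-u+o(u)}$ with $u=\tfrac{\log N}{\gamma\log L(\disc(\O))}$; plugging in $N\le\sqrt{|\disc(\O)|}$ and $\log L=\sqrt{\log\disc(\O)\log\log\disc(\O)}$ yields $u=\tfrac{1}{2\gamma}\cdot\sqrt{\log\disc(\O)/\log\log\disc(\O)}\,(1+o(1))$ and hence a probability of $L(\disc(\O))^{-1/(4\gamma)+o(1)}$. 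One must also argue that the distribution of $\aaa$ over reduced ideals is close enough to uniform for this heuristic to apply — this is exactly the content of Buchmann's analysis under GRH together with the smoothness hypothesis, so I would simply cite \cite{buchmann} rather than reprove it, noting that the argument is insensitive to the degree of the field and thus applies here.

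Combining: the expected number of iterations of the loop is the reciprocal of the success probability, namely $L(\disc(\O))^{1/(4\gamma)+o(1)}$, and each iteration costs $L(\disc(\O))^{o(1)}$ beyond the one-time cost $L(\disc(\O))^{\gamma+o(1)}$ of building $\BBB$, giving the claimed expected running time $L(\disc(\O))^{\gamma+o(1)}+L(\disc(\O))^{1/(4\gamma)+o(1)}$. For the total norm, when Step~4 succeeds the output relation consists of the images under $\norm_{\Phi^r}\circ\norm_\Phi$ of primes $\ppp\in\BBB$, each of norm at most $L(\disc(\O))^\gamma$, appearing with multiplicities $x_\ppp-y_\ppp$ that are bounded by polylogarithmic quantities. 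Since the type norm maps multiply norms by the exponent $g^2$ (a type has $g$ embeddings, applied twice through $\norm_{\Phi^r}\circ\norm_\Phi$), each term contributes norm at most $L(\disc(\O))^{g^2\gamma}$, and summing the polylogarithmically-many terms absorbs into the $o(1)$, yielding total norm $L(\disc(\O))^{g^2\gamma+o(1)}$. The only genuinely new ingredient compared to \cite{buchmann,grh-only} is bounding the cost and norm-inflation of the map $\norm_{\Phi^r}\circ\norm_\Phi$, which is polynomial-time and contributes only the $g^2$ factor in the exponent, as already observed before the statement of the algorithm.
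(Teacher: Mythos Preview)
Your proposal is correct and follows precisely the approach the paper takes. In fact, the paper does not give a proof of this proposition at all: it simply remarks that all steps have previously been analyzed in \cite{buchmann} and \cite{grh-only}, that the only new ingredient is the evaluation of $\norm_{\Phi^r}\circ\norm_\Phi$ (which is polynomial time and inflates norms by the exponent $g^2$), and then states the proposition as a direct borrowing of Buchmann's complexity bound. Your write-up is an accurate unpacking of exactly that borrowed analysis.
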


\section{Comparing candidate endomorphism rings}\label{sec:compare}

Our main idea to compute $\End(\A)$ is to exploit Theorem~\ref{th:shimura}: we
compare the structure of polarized class groups of candidate endomorphism rings
$\O$ with that of isogenies from the variety $\A$. For this, we generate
relations that hold in $\O$ using Algorithm~\ref{alg:relat} and test whether
the corresponding isogenies map to isomorphic varieties.

It is important to observe that Algorithm~\ref{alg:relat} only outputs
relations whose elements lie in the image of $\norm_{\Phi^r}\circ\norm_\Phi$;
these may not be all the relations that hold in $\O$, but only a sublattice
$\Lambda_\Phi(\O)$ of them. We start by computing this lattice and, for this,
use the following algorithm.

\begin{algo}\label{alg:test}
   \textsc{Input:} & An absolutely simple, ordinary, principally polarized abelian variety $\A$
\\ & of dimension $g$ defined over $\FF_q$ and an order $\O$ containing $\ZZ[\pi,\overline\pi]$.
\\ \textsc{Output:} & Whether $\Lambda_\Phi(\O)\subset\Lambda_\Phi(\End(\A))$.
\smallskip
\\ 1. & Repeat $5g^2\log_2(q)$ times:
\\ 2. & \qquad Find a relation $(\alpha_1,\ldots,\alpha_k)$ of $\CCC(\O)$ using Algorithm~\ref{alg:relat}.
\\ 3. & \qquad If $\phi_{\alpha_1}\circ\cdots\circ\phi_{\alpha_k}$ does not map $\A$ to an isomorphic variety, return false.
\\ 4. & Return true.
\end{algo}

If $\O'\subset\O$ are two orders containing $\ZZ[\pi,\overline\pi]$ stable
under complex conjugation, it follows from Hypothesis (B) that the relations
generated by Algorithm~\ref{alg:relat} are quasi-uniformly distributed in the
quotient $\Lambda_\Phi(\O)/\Lambda_\Phi(\O')$; see \cite[§6]{grh-only} for a
proof stated for imaginary quadratic fields but which readily carries over to
arbitrary CM-fields. Therefore, the relations output after $5g^2\log_2(q)$ runs
of Algorithm~\ref{alg:relat} characterize $\Lambda_\Phi(\O)$ with error
probability at most $(1/2)^{5g^2\log_2(q)}=1/q^{5g^2}$.

To balance the cost of generating relations using Algorithm~\ref{alg:relat}
with that of evaluating the corresponding isogenies, we set
$\gamma=1/(2g\sqrt{3})$ in Step 2, and obtain the following result.

\begin{proposition}\label{prop:test}
Under Hypotheses (A,B), Algorithm~\ref{alg:test} determines whether the
relation lattice $\Lambda_\Phi$ of $\End(\A)$ contains that of a prescribed
order $\O$ with error probability $1/q^{5g^2}$ using an expected
\[
L(|\disc(\O)|)^{g\sqrt{3}/2+o(1)}
\]
operations in the base field.
\end{proposition}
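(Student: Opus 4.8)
The plan is to assemble the proposition from the three ingredients already established: the correctness of Algorithm~\ref{alg:test} (the equidistribution of relations in the quotient lattice), the complexity of Algorithm~\ref{alg:relat} (Proposition~\ref{prop:relat}), and the complexity of isogeny evaluation (Proposition~\ref{prop:isog}). First I would recall that Algorithm~\ref{alg:test} runs Algorithm~\ref{alg:relat} exactly $5g^2\log_2(q)$ times, so its cost is $\log(q)$ times the cost of one iteration, which is a loop step: generate one relation, then evaluate the corresponding isogeny chain. Since $\log(q)$ is absorbed into the $L(\cdot)^{o(1)}$ term (note $\disc(\O)$ is polynomially bounded in $q$, so $\log|\disc(\O)|$ and $\log q$ are polynomially related), it suffices to bound the cost of a single iteration.

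Next I would plug in $\gamma = 1/(2g\sqrt3)$. By Proposition~\ref{prop:relat}, generating one relation costs $L(|\disc(\O)|)^{\gamma+o(1)} + L(|\disc(\O)|)^{1/(4\gamma)+o(1)}$; with this $\gamma$ we have $1/(4\gamma) = g\sqrt3/2$, so the dominant term is $L(|\disc(\O)|)^{g\sqrt3/2+o(1)}$, and the relation produced has total norm $L(|\disc(\O)|)^{g^2\gamma+o(1)} = L(|\disc(\O)|)^{g\sqrt3/2+o(1)}$. Now for the isogeny evaluation: each element $\alpha_i$ of the relation is $\norm_{\Phi^r}(\norm_\Phi(\ppp))$ for a prime $\ppp$ of $\O$ of norm at most $L(\disc(\O))^\gamma$, so its $K/\QQ$-norm is at most the $g^2$-th power of that, i.e.\ bounded by $L(|\disc(\O)|)^{g^2\gamma+o(1)}$; more precisely the total norm of the relation, which bounds $\sum_i \norm_{K/\QQ}(\alpha_i)$, is $L(|\disc(\O)|)^{g\sqrt3/2+o(1)}$. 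Each $\alpha_i$ corresponds to a maximal isotropic isogeny with kernel $(\ZZ/\ell_i\ZZ)^g$ where $\ell_i^g = \norm_{K/\QQ}(\alpha_i)$. By Proposition~\ref{prop:isog} evaluating such an isogeny costs $\ell_i^{3g+o(1)} = \norm_{K/\QQ}(\alpha_i)^{3+o(1)}$ base-field operations, but one must also account for the cost of passing to the field extension over which the kernel points are defined and running Couveignes' torsion algorithm there; as discussed in Section~\ref{sec:isog} that extension has degree at most $\norm_{K/\QQ}(\alpha_i)-1$, contributing only a further polynomial factor in $\norm_{K/\QQ}(\alpha_i)$, hence $L(|\disc(\O)|)^{o(1)}$. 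Summing over the $k$ steps of the chain — and $k$ is itself bounded by a power of $\log|\disc(\O)|$, so $k = L(|\disc(\O)|)^{o(1)}$ — the total cost of evaluating one isogeny chain is at most $k \cdot \max_i \norm_{K/\QQ}(\alpha_i)^{3+o(1)} \le L(|\disc(\O)|)^{g\sqrt3/2 + o(1)}$ (using $3 \cdot g^2\gamma = 3 \cdot g\sqrt3/2 \cdot (1/g)$... more directly $3 g^2 \gamma = 3g^2/(2g\sqrt3) = g\sqrt3/2$). Wait — I should double-check this exponent balance carefully, since it is the crux; with $\gamma = 1/(2g\sqrt3)$ one has relation-generation exponent $1/(4\gamma) = 2g\sqrt3/4 = g\sqrt3/2$ and isogeny-evaluation exponent $3 g^2 \gamma = 3g^2/(2g\sqrt3) = 3g/(2\sqrt3) = g\sqrt3/2$, so the two costs are indeed balanced at $L^{g\sqrt3/2+o(1)}$, which is exactly why this value of $\gamma$ was chosen.

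Finally, the error-probability claim is immediate from the discussion preceding the proposition: the relations output after $5g^2\log_2(q)$ independent runs of Algorithm~\ref{alg:relat} fail to characterize $\Lambda_\Phi(\O)$ with probability at most $(1/2)^{5g^2\log_2(q)} = 1/q^{5g^2}$, and Algorithm~\ref{alg:test} returns the correct answer whenever they do (using Hypothesis~(A) to guarantee that a relation holding in $\O$ gives rise to an isogeny chain returning to $\A$ exactly when $\Lambda_\Phi(\O) \subset \Lambda_\Phi(\End(\A))$). The main obstacle in writing this up cleanly is the bookkeeping around the $o(1)$ terms: one must be careful that the polynomial overheads (the extension-field degree for Couveignes' algorithm, the number $k$ of primes in a relation, the $\log q$ repetition factor, the field-of-definition descent via Mestre's method) genuinely collapse into $L(|\disc(\O)|)^{o(1)}$, which relies on $\disc(\O)$ being polynomially bounded in $q$ so that $L(|\disc(\O)|)^{o(1)}$ and $L(q)^{o(1)}$ and $\operatorname{poly}(\log q)$ are mutually absorbable; none of this is deep, but it is where the argument could go wrong if stated loosely.
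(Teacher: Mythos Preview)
Your argument follows the paper's exactly: the paper's entire ``proof'' is the error-probability paragraph preceding the proposition together with the sentence setting $\gamma=1/(2g\sqrt3)$ to balance the two costs, and you have correctly unpacked that balance as $1/(4\gamma)=3g^2\gamma=g\sqrt3/2$.

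Two slips to clean up. First, the total-norm exponent is $g^2\gamma=g/(2\sqrt3)=g\sqrt3/6$, not $g\sqrt3/2$ as you write; this is harmless since you never actually use that value and you recompute $3g^2\gamma$ correctly afterward. Second, and more substantively, your claim that the extension-field and Couveignes overhead is ``a further polynomial factor in $\norm_{K/\QQ}(\alpha_i)$, hence $L(|\disc(\O)|)^{o(1)}$'' is wrong as stated: a quantity polynomial in $\norm_{K/\QQ}(\alpha_i)\le L^{g^2\gamma}$ is $L^{O(1)}$, not $L^{o(1)}$. The correct justification is that computing the $\ell$-torsion over the degree-$(\le\ell^g)$ extension costs $\ell^{O(g)}$ base-field operations with an exponent not exceeding $3g$, so this step is dominated by the $\ell^{3g+o(1)}$ of Proposition~\ref{prop:isog}. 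The paper leaves this point implicit too, so your final bound survives, but the reasoning you gave at that step does not stand on its own.
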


\paragraph{Note.} Rather than generating independent relations for each order
$\O$ of the lattice to be tested, one might be tempted to first compute the
full class group structure of the maximal order $\O_K$ and then deduce
relations of smaller orders $\O$ via the exact sequence:
\[
1\to\O^\times\to\O_K^\times\to(\O_K/\fff)^\times/(\O/\fff)^\times\to\Pic(\O)\to\Pic(\O_K)\to 1
\]
where $\fff$ is the conductor of $\O$, that is, the largest ideal of both $\O$
and $\O_K$. This has two disadvantages: first, computing class groups is much
more expensive than generating just $O(\log(q))$ relations; second, the
relations of $\O$ given directly by the exact sequence above grow linearly in
the index $[\O_K:\O]$, and deriving subexponential-size relations requires
using an algorithm similar to \ref{alg:relat} anyhow.

\section{Locating the relation lattice}\label{sec:lattice}
\label{sec:almost}

Before identifying orders $\O$ satisfying
$\Lambda_\Phi(\O)=\Lambda_\Phi(\End(\A))$, let us first bound the number of
candidates, that is, orders containing $\ZZ[\pi,\overline\pi]$ stable under
complex conjugation, and their discriminants.

\begin{lemma}\label{lem:ord-bound}
We have:
\[
\left|\disc(\ZZ[\pi,\overline\pi])\right|<4^{g(2g-1)}q^{g^2},
\]
\[
\left[\O_K:\ZZ[\pi,\overline\pi]\right]<2^{g(2g-1)}q^{g^2/2}.
\]
\end{lemma}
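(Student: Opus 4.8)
The plan is to compute $\disc(\ZZ[\pi,\overline\pi])$ in closed form in terms of the real Weil polynomial of $\A$ and then bound it using the archimedean estimate $|\pi^{(k)}|=\sqrt q$ on the conjugates of the Frobenius; the index bound will then follow immediately from $\disc(\ZZ[\pi,\overline\pi])=[\O_K:\ZZ[\pi,\overline\pi]]^{2}\disc(K)$ together with $|\disc(K)|\ge 1$.

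First I would realize $\ZZ[\pi,\overline\pi]$ as a rank-two extension of an order of $K_0$. Put $s=\pi+\overline\pi$; then $\overline s=s$ gives $s\in K_0$, and $\overline\pi=s-\pi$ gives $\ZZ[\pi,\overline\pi]=\ZZ[s][\pi]$. One checks $\QQ(s)=K_0$: it is contained in $K_0$, and since $\pi\notin K_0$ is a root of the monic quadratic $X^{2}-sX+q$ over $\QQ(s)$ with $\QQ(s)(\pi)=K$, one has $[K:\QQ(s)]=2$, whence $[\QQ(s):\QQ]=g$. Thus $X^{2}-sX+q$ is the minimal polynomial of $\pi$ over $K_0$, so $\ZZ[s][\pi]$ is free of rank $2$ over $\ZZ[s]$ on the basis $\{1,\pi\}$, while $\ZZ[s]=\ZZ[X]/(h)$ is free of rank $g$ over $\ZZ$, where $h\in\ZZ[X]$ is the monic (irreducible, degree $g$) minimal polynomial of $s$.

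Next I would invoke the tower formula for discriminants of free ring extensions to obtain
\[
\disc(\ZZ[\pi,\overline\pi])=\pm\,\disc(h)^{2}\cdot\norm_{K_0/\QQ}(s^{2}-4q),
\]
the last factor being the norm of the discriminant $s^{2}-4q$ of $X^{2}-sX+q$. Writing $s^{(1)},\dots,s^{(g)}$ for the conjugates of $s$, each equals $\pi^{(k)}+\overline{\pi^{(k)}}=2\mathrm{Re}(\pi^{(k)})$ for a root $\pi^{(k)}$ of $\chi_\pi$ with $|\pi^{(k)}|=\sqrt q$, so $|s^{(k)}|\le 2\sqrt q$ and $0\le(s^{(k)})^{2}\le 4q$. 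Hence $|\disc(h)|=\prod_{k<l}(s^{(k)}-s^{(l)})^{2}\le(4\sqrt q)^{g(g-1)}$ and $|\norm_{K_0/\QQ}(s^{2}-4q)|=\prod_{k}\bigl(4q-(s^{(k)})^{2}\bigr)<(4q)^{g}$, the strictness coming from the fact that every $s^{(k)}\ne 0$ — automatic for $g\ge 2$ because $h$ is irreducible of degree $g$, so $h(0)\ne 0$, and true for $g=1$ because $\A$ is ordinary and so has nonzero Frobenius trace $s$. Multiplying out gives $|\disc(\ZZ[\pi,\overline\pi])|<4^{2g(g-1)+g}q^{g(g-1)+g}=4^{g(2g-1)}q^{g^{2}}$, the first claimed inequality; dividing by $|\disc(K)|\ge 1$ and taking square roots yields $[\O_K:\ZZ[\pi,\overline\pi]]<2^{g(2g-1)}q^{g^{2}/2}$.

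I do not expect a real obstacle here. The only points that need a little care are checking that $\ZZ[\pi,\overline\pi]$ genuinely has the two-step free structure above, so that the tower discriminant formula applies with the clean factors $\disc(h)^{2}$ and $\norm_{K_0/\QQ}(s^{2}-4q)$, and tracking where the inequalities are strict so as to obtain a strict bound rather than only a weak one.
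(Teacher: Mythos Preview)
Your argument is correct, and it reaches the same bounds by a genuinely different route from the paper's proof. The paper does not work with the tower $\ZZ\subset\ZZ[s]\subset\ZZ[\pi,\overline\pi]$ at all; instead it bounds $|\disc(\chi_\pi)|$ directly by $(2\sqrt q)^{2g(2g-1)}$ using the Weil bound on the $2g$ roots, and then passes from $\disc(\ZZ[\pi])=\disc(\chi_\pi)$ to $\disc(\ZZ[\pi,\overline\pi])$ via the index identity $[\ZZ[\pi,\overline\pi]:\ZZ[\pi]]=q^{g(g-1)/2}$, dividing out $q^{g(g-1)}$. The second inequality is then derived, just as you do, from $|\disc(\O_K)|>1$.

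Your approach trades that index identity for the tower discriminant formula, and in exchange yields the explicit factorization $|\disc(\ZZ[\pi,\overline\pi])|=\disc(h)^{2}\cdot|\norm_{K_0/\QQ}(s^{2}-4q)|$, which is structurally informative and makes the role of the real Weil polynomial $h$ transparent. The paper's route is slightly shorter and avoids checking the free two-step structure, at the cost of invoking the formula for $[\ZZ[\pi,\overline\pi]:\ZZ[\pi]]$ as a black box. Your strictness argument (via $h(0)\neq 0$ for $g\ge 2$ and the ordinariness hypothesis for $g=1$) is a nice touch; the paper's strictness is implicit in the same ordinariness constraint on the roots of $\chi_\pi$.
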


\begin{proof}
All $2g$ complex roots of $\chi_\pi$ have absolute value $\sqrt{q}$, so we have
$|\disc(\chi_\pi)|<(2\sqrt{q})^{2g(2g-1)}$. The bounds then follow from the
classical relation $[\O:\O']^2=\disc(\O')/\disc(\O)$ and, for the first one,
the identity $[\ZZ[\pi,\overline\pi]:\ZZ[\pi]]=q^{g(g-1)/2}$ and, for the second one,
the triviality $|\disc(\O_K)|>1$.
\end{proof}

These bounds are nearly tight so there might be exponentially many candidate
endomorphism rings; to efficiently identify $\Lambda_\Phi(\End(\A))$ among
them, we exploit the identity
$\O\subset\O'\Rightarrow\Lambda_\Phi(\O)\subset\Lambda_\Phi(\O')$ by performing
an $n$-ary search in the lattice of orders using the following algorithm
adapted from \cite{grh-only}.

\begin{algo}\label{alg:ascend}
   \textsc{Input:} & An absolutely simple, ordinary, principally polarized abelian variety $\A$
\\ & of dimension $g$ defined over a field with $q$ elements.
\\ \textsc{Output:} & The relation lattice of its endomorphism ring.
\smallskip
\\ 1. & Compute the Frobenius polynomial $\chi_\pi$ of $\A$.
\\ 2. & Factor its discriminant and construct the order $\O'=\ZZ[\pi,\overline\pi]$.
\\ 3. & For orders $\O$ directly above $\O'$:
\\ 4. & \qquad If $\Lambda_\Phi(\O)\subset\Lambda_\Phi(\End(\A))$, set $\O'\leftarrow\O$ and go to Step~3.
\\ 6. & Return $\Lambda_\Phi(\O')$.
\end{algo}

For Step~2, we use the unconditional factoring method of Lenstra and Pomerance
\cite{lenstra-pomerance}; its complexity is $L(|\disc(\chi_\pi)|)^{1+o(1)}$,
that is, at most $L(q)^{g\sqrt{2}+o(1)}$. Alternatively, one may rely on the
number field sieve \cite{nfs} which has a heuristically better runtime.

By \emph{directly above}, we mean that $\O$ contains $\O'$ and no order lies
strictly between them; the distance between two such orders necessarily divides
$\ell^{2g-1}$ for some prime factor $\ell$ of $[\O_K:\ZZ[\pi,\overline\pi]]$,
since $\O'$ must then contain $\ZZ+\ell\O$.

Recall that Step~4 fails with error probability at most $1/q^{5g^2}$. By
Lemma~\ref{lem:ord-bound}, the number of orders $\O$ considered in Step~3 is at
most $q^{\frac{5}{2}g^2}$, and the number of times this step is reached is
bounded by $\log(q^{\frac{5}{2}g^2})$; the probability that
Algorithm~\ref{alg:ascend} fails is therefore less than $1/q^{g^2}$. We will
later explain how a candidate endomorphism ring may be unconditionally
certified and verified so that, in the unlikely event that
Algorithm~\ref{alg:ascend} does fail, one notices and may start it over again.

\bigskip

The lattice of orders containing $\ZZ[\pi,\overline\pi]$ typically consists
entirely of orders that are either minimal or maximal locally at large primes
$\ell$; indeed, integers $v=[\O_K:\ZZ[\pi,\overline\pi]]$ are not likely to be
divisible by squares of large primes. More precisely, for any $\tau>0$, we have
\[
\#\left\{v\in\{1,\ldots,n\}:\exists\ell\in\P_{>L(n)^\tau},\ell^2|v\right\}
\leq\sum_{\ell\in\P_{>L(n)^\tau}}\frac{n}{\ell^2}\leq\frac{n}{L(n)^\tau},
\]
which is negligible compared to $n$ as it goes to infinity; therefore, assuming
that $v$ has similar divisibility properties to random integers less than
$n=2^{g(2g-1)}q^{g^2/2}$ (as per Lemma~\ref{lem:ord-bound}), only a
zero-density set of abelian varieties of dimension $g$ over $\FF_q$ have
lattices of orders that, locally at some prime $\ell>L(n)^\tau$, have height
greater than $1$.

Discarding that set, there is only one order directly above (resp. below) any
given one locally at large primes $\ell$, and they can be found using a Gröbner
basis algorithm \cite[§\textsc{iii}.2.3]{thesis} in time subexponential in
$\log(q)$. Locally at primes $\ell\leq L(n)^\tau$, we resort to the much more
direct method of enumerating all subgroups of $\frac{1}{\ell}\O/\O$ and
selecting those which are orders; this takes time polynomial in $\ell$, that
is, subexponential in $\log(q)$, and we select $\tau$ small enough so that this
complexity is negligible compared to our overall complexity bound.

Putting all the above together, we obtain:

\begin{hypothesis}[C]
The integers $[\O_K:\ZZ[\pi,\overline\pi]]$ have the divisibility properties
of random integers, so that orders directly above a given one may be enumerated
efficiently as described above.
\end{hypothesis}

\begin{theorem}\label{th:almost}
Subject to Hypotheses (A--C) the expected running time of
Algorithm~\ref{alg:ascend} is bounded by
\[
L(q)^{g^2\sqrt{3}/2+o(1)}.
\]
\end{theorem}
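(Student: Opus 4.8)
The plan is to bound the running time of Algorithm~\ref{alg:ascend} by accounting for each of its steps and multiplying the per-step cost by the number of times it is executed. First I would dispose of Steps~1 and 2: by Pila's result the Frobenius polynomial is computed in time polynomial in $\log(q)$, and by the discussion following the algorithm the factorization of $\disc(\chi_\pi)$ via \cite{lenstra-pomerance} costs $L(q)^{g\sqrt{2}+o(1)}$, which is subsumed by the claimed bound since $g\sqrt 2<g^2\sqrt 3/2$ for $g=2$; constructing $\ZZ[\pi,\overline\pi]$ is likewise cheap. So the dominant cost comes from the main loop, Steps~3--4.

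Next I would count loop iterations. Each time Step~4 succeeds, $\O'$ strictly ascends in the lattice of orders, and by Lemma~\ref{lem:ord-bound} this lattice has height $O(\log(q))$ (the index $[\O_K:\ZZ[\pi,\overline\pi]]$ being bounded by $2^{g(2g-1)}q^{g^2/2}$, its number of prime-power factors is $O(\log q)$); also, by Lemma~\ref{lem:ord-bound} again the number of orders directly above any given one is at most $q^{\frac52 g^2}$, but — crucially — Step~3 does not test all of them one-by-one blindly. Here I would invoke Hypothesis~(C): after discarding the zero-density worst-case set, at each prime $\ell$ there is a unique order directly above the current one, found by a Gröbner-basis computation (for $\ell>L(n)^\tau$) or by direct enumeration of subgroups of $\frac1\ell\O/\O$ (for $\ell\le L(n)^\tau$), both subexponential in $\log q$ and hence negligible. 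Thus each pass through Step~3 enumerates only $O(\log q)$ candidate orders $\O$ (one per prime factor of the index), and the total number of calls to Step~4 over the whole run is $O(\log^2 q)$.

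Then I would bound the cost of a single Step~4, i.e.\ a single run of Algorithm~\ref{alg:test}. By Proposition~\ref{prop:test}, with the choice $\gamma=1/(2g\sqrt3)$ this costs $L(|\disc(\O)|)^{g\sqrt3/2+o(1)}$ operations in the base field, with error probability $1/q^{5g^2}$. Since every order $\O$ under consideration contains $\ZZ[\pi,\overline\pi]$, we have $|\disc(\O)|\le|\disc(\ZZ[\pi,\overline\pi])|<4^{g(2g-1)}q^{g^2}$ by Lemma~\ref{lem:ord-bound}, so $L(|\disc(\O)|)\le L(q^{g^2})^{1+o(1)}=L(q)^{g+o(1)}$ using the standard identity $L(q^c)=L(q)^{\sqrt c+o(1)}$. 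Hence one Step~4 costs $L(q)^{g\cdot g\sqrt3/2+o(1)}=L(q)^{g^2\sqrt3/2+o(1)}$. Multiplying by the $O(\log^2 q)$ executions only affects the $o(1)$ term, as do the subexponential-in-$\log q$ overheads of Hypothesis~(C) and Steps~1--2; and the error analysis preceding this theorem already shows the overall failure probability is below $1/q^{g^2}$, so the ``expected'' running time is dominated by the successful branch. This yields the stated bound $L(q)^{g^2\sqrt3/2+o(1)}$.

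The main obstacle — or rather the place where the argument leans hardest on hypotheses rather than on calculation — is the interplay of Step~3 with Hypothesis~(C): without it, Lemma~\ref{lem:ord-bound} only gives exponentially many orders directly above $\O'$, and one cannot afford to test them all. The genuinely delicate point is that the Gröbner-basis enumeration of the unique order directly above a given one at a large prime runs in subexponential time; I would cite \cite[§\textsc{iii}.2.3]{thesis} for this and treat the verification that $\tau$ can be chosen to make the small-prime enumeration negligible as a routine matching of exponents. Everything else is bookkeeping with the $L$-function identities and the height/index bounds already in hand.
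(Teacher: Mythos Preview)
Your proposal is correct and follows essentially the same approach as the paper: identify Step~4 as the bottleneck, bound its cost via Proposition~\ref{prop:test}, convert $L(|\disc(\O)|)$ to $L(q)^{g+o(1)}$ through Lemma~\ref{lem:ord-bound} and the identity $L(q^c)=L(q)^{\sqrt c+o(1)}$, and absorb the loop count and the remaining steps into the $o(1)$. The paper's own proof is just two sentences that lean on the discussion preceding the theorem; you have simply written that discussion out in full.

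One small inaccuracy worth cleaning up: you write that under Hypothesis~(C) ``at each prime $\ell$ there is a unique order directly above the current one'' and hence that Step~3 enumerates $O(\log q)$ candidates. Uniqueness is only claimed for primes $\ell>L(n)^\tau$; at the small primes one enumerates all sub\-$\ZZ$-modules of $\tfrac1\ell\O/\O$ that are orders, and there may be several. This does not affect the bound, since their number is polynomial in $\ell\le L(n)^\tau$ and hence subexponential, but your sentence ``one per prime factor of the index'' should be weakened accordingly.
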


\begin{proof}
The bottleneck of this algorithm is Step~4 which uses
$L(|\disc(\ZZ[\pi,\overline\pi])|)^{g\sqrt{3}/2+o(1)}$ operations by
Proposition~\ref{prop:test}. Using Lemma~\ref{lem:ord-bound}, we may therefore
bound the total complexity by $L(q)^{g^2\sqrt{3}/2+o(1)}$.
\end{proof}

\section{Orders from relation lattices}
\label{sec:orders}

The relation sublattice $\Lambda_\Phi(\O)$ suffices to characterize $\O$
locally at almost all primes:

\def\stuff{\cite[Proposition 20]{quartic-ccc}}
\begin{theorem}[\stuff]\label{th:marco}
Let $(\A_i/\FF_{q_i})_{i\in\NN}$ be a sequence of ordinary abelian varieties
defined over fields of monotonously increasing cardinality $q_i\to\infty$. Denote by
$v_i=[\O_{\QQ(\pi_i)}:\ZZ[\pi_i,\overline\pi_i]]$ their conductor gaps, and by
$n_i=\norm_{K_0/\QQ}(\Delta_{K/K_0})$ the norm of the relative discriminant of
their CM-fields $K=\QQ(\pi_i)$. Assume that there exists a constant $C$ such
that, for all positive integers $u$ and $m$:
\begin{itemize}
\item the proportion of indices $i<m$ for which $u|v_i$ is at most $C/u$;
\item the proportion of indices $i<m$ for which $u|v_i$ and $u|n_i$ is at most $C/u^2$.
\end{itemize}
For any $\tau>0$, the density of indices $i$ for which there exists two orders
$\O$ and $\O'$ containing $\ZZ[\pi_i,\overline\pi_i]$, stable under complex
conjugation, satisfying $\Lambda_\Phi(\O)=\Lambda_\Phi(\O')$, and such that
$\ell^{\val_\ell v_i}>L(q_i)^\tau$ for some prime factor $\ell$ of the index
$[\O+\O':\O\cap\O']$, is zero.
\end{theorem}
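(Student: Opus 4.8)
The plan is to reduce the statement to a counting argument over the lattice of orders, controlled by the two hypotheses on divisibility of $v_i$ and $n_i$. First I would fix $\tau>0$ and a prime $\ell$, and ask: for how many indices $i<m$ does there exist a pair of distinct orders $\O,\O'\supset\ZZ[\pi_i,\overline\pi_i]$ with $\Lambda_\Phi(\O)=\Lambda_\Phi(\O')$ whose distance $[\O+\O':\O\cap\O']$ is divisible by $\ell$ with $\ell^{\val_\ell v_i}>L(q_i)^\tau$? The key local observation is that the polarized class group $\CCC(\O)$, and hence the relation sublattice $\Lambda_\Phi(\O)$, determines $\O$ locally at $\ell$ unless the structure of $(\O_K/\ell^k)^\times$ modulo totally-positive-unit and conjugation constraints collapses, which (via the exact sequence relating $\Pic(\O)$ to $\Pic(\O_K)$ quoted in Section~\ref{sec:compare}, in its polarized form) forces a nontrivial cancellation that can only happen when $\ell$ also divides the relative discriminant norm $n_i$. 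So the bad event at $\ell$ entails both $\ell^2\mid v_i$ (we need height $>1$ locally at $\ell$, so the conductor gap is divisible by $\ell^2$ once $\val_\ell v_i\ge 1$ and the orders are genuinely nested with $\ell$-power distance) \emph{and} $\ell$ divides $n_i$; more precisely, the relevant index divides a power of $\ell$ that also divides $n_i$.

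Next I would invoke the two hypotheses. Write $u=\ell^j$ for the appropriate power. By the second hypothesis, the proportion of $i<m$ with $u\mid v_i$ and $u\mid n_i$ is at most $C/u^2$. Summing over all primes $\ell$ with $\ell^{\val_\ell v_i}>L(q_i)^\tau$ and all admissible exponents, the total proportion of bad indices is bounded by
\[
\sum_{\ell>L(q)^{\tau/?}}\ \sum_{j\ge 1}\frac{C}{\ell^{2j}}\ \ll\ \sum_{\ell>B}\frac{C}{\ell^2-1}\ \ll\ \frac{1}{B},
\]
where $B=B(m)\to\infty$ is a lower bound on the relevant primes coming from the constraint $\ell^{\val_\ell v_i}>L(q_i)^\tau$ together with monotonicity of $q_i$. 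Since this tends to $0$ as $m\to\infty$, the density of bad indices is zero. The first hypothesis is used only to handle the degenerate possibility that $v_i$ itself is small (so that no prime $\ell$ with $\ell^{\val_\ell v_i}>L(q_i)^\tau$ exists at all), which makes the bad set empty for those indices — or, more carefully, to control intermediate orders where the conjugation-stability forces the distance to divide $\ell^{2g-1}$ as in Lemma~\ref{lem:jump}, trading one factor of $\ell$ for the $1/u$ bound.

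The main obstacle is the purely algebraic input: establishing that $\Lambda_\Phi(\O)=\Lambda_\Phi(\O')$ for distinct $\ell$-locally-distinct orders genuinely requires $\ell\mid n_i$, i.e., that the \emph{polarized} class group (not just $\Pic$) distinguishes orders at primes unramified in $K/K_0$. This is where the structure of $I_\O/P_\O$ enters: the obstruction to an order being detected is measured by the cokernel of a map built from $(\O_K/\fff)^\times$ modulo the image of totally positive units and the norm-one condition $\aaa\overline\aaa=\rho\O$, and showing this cokernel is trivial away from ramified primes is the delicate step — it is essentially the content of \cite[Proposition 20]{quartic-ccc} and I would cite its proof rather than reprove it, noting that everything else is the elementary summation above. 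A secondary technical point is making the dependence $B(m)\to\infty$ rigorous: one must use that $q_i\to\infty$ monotonically so that $L(q_i)^\tau\to\infty$, hence any fixed prime $\ell$ eventually fails the condition $\ell^{\val_\ell v_i}>L(q_i)^\tau$ unless $\val_\ell v_i$ grows, and the contribution of growing exponents is suppressed by the $1/u^2$ bound.
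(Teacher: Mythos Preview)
The paper does not prove this theorem: it is imported wholesale from \cite[Proposition~20]{quartic-ccc}, as the bracketed citation in the theorem header indicates, and the text that follows only paraphrases the statement before introducing Hypothesis~(D). There is therefore no proof in the paper to compare your proposal against.

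Your outline is nonetheless a plausible reconstruction of how such a result is established, and you yourself recognize that the crux --- showing that $\Lambda_\Phi(\O)=\Lambda_\Phi(\O')$ with $\O\neq\O'$ locally at $\ell$ forces $\ell$ to divide the relative discriminant norm $n_i$ --- is precisely the content of the cited proposition, which you would invoke rather than reprove. Once that algebraic input is granted, the density summation you sketch is routine. Two places where your write-up is loose: first, the constraint $\ell^{\val_\ell v_i}>L(q_i)^\tau$ does not give a lower bound on $\ell$ itself (small $\ell$ with large $\val_\ell v_i$ qualify), so the sum cannot simply be restricted to primes $\ell>B(m)$ as you initially write; the clean formulation, which your final paragraph does gesture toward, is to sum over prime powers $u>L(q_i)^\tau$ and use $\sum_{u>B}C/u^2\ll C/B$. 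Second, your account of where the \emph{first} hypothesis enters is vague and not quite right --- it is not there to handle the trivial case where no large prime power divides $v_i$.
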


The above result essentially means that, excluding a zero-density set of Weil
numbers $\pi$, the lattice $\Lambda_\Phi(\O)$ uniquely characterizes the order
$\O$ from other orders containing $\ZZ[\pi,\overline\pi]$ except locally at
small primes.

Note that the smoothness assumptions are not strong and very similar to that of
Hypothesis~(C): were the $v_i$ drawn uniformly at random from
$\{1,\ldots,(8q_i)^2\}$ and independently from the $n_i$, they would be
satisfied with $C=1$. In practice, they are also found to hold for abelian
varieties to which this work is of most interest: random isomorphism classes of
abelian varieties with complex multiplication by a prescribed field defined
over finite fields of increasing cardinality. At any rate, we assume this
heuristic:

\begin{hypothesis}[D]
The conductor gap and relative discriminant of families of abelian varieties
considered here satisfies the smoothness conditions of the above theorem.
\end{hypothesis}

Under this hypothesis, once an order $\O$ with
$\Lambda_\Phi(\O)=\Lambda_\Phi(\End(\A))$ is found, it only remains to identify
$\End(\A)$ to compute it locally at all prime factors of $\disc(\pi)$ less than
$L(q)^\tau$.

To compute endomorphism rings locally at small primes $\ell$, we rely on the
direct method of Eisenträger and Lauter \cite[§6.5]{eisentrager-lauter}, which
uses $\ell^{2gv+o(1)}$ operations in the base field, where $v$ is the valuation
of $[\O_K:\ZZ[\pi,\overline\pi]]$ at $\ell$. As above, to ensure that this cost
is negligible relative to our overall complexity bound, we make $\tau>0$ small
enough and omit the zero-density set of abelian varieties for which this index
is divisible by a power greater than $L(q)^\tau$ of a prime less than
$L(q)^\tau$. This gives:

\begin{theorem}\label{th:main}
Subject to Hypotheses (A), (B), (C) and (D), the endomorphism ring of an
absolutely simple, ordinary, principally polarized abelian variety of dimension
$g=2$ defined over a finite field with $q$ elements may be computed in
average probabilistic time
\[
L(q)^{g^2\sqrt{3}/2+o(1)}
\]
with error probability less than $1/q^{g^2}$.
\end{theorem}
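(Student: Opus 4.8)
The plan is to assemble Theorem~\ref{th:main} by chaining together the algorithms and guarantees already established. First I would invoke Algorithm~\ref{alg:ascend} to compute the relation lattice $\Lambda_\Phi(\End(\A))$: by Theorem~\ref{th:almost} this takes expected time $L(q)^{g^2\sqrt{3}/2+o(1)}$ under Hypotheses (A--C), and by the error analysis preceding Theorem~\ref{th:almost} it returns the correct lattice except with probability less than $1/q^{g^2}$. Next I would appeal to Theorem~\ref{th:marco} together with Hypothesis~(D): discarding the zero-density set of abelian varieties it excludes, the lattice $\Lambda_\Phi(\End(\A))$ determines $\End(\A)$ locally at every prime $\ell$ with $\ell^{\val_\ell v}>L(q)^\tau$, so the only ambiguity left is at the finitely many primes $\ell\leq L(q)^\tau$ dividing $[\O_K:\ZZ[\pi,\overline\pi]]$ to a power exceeding $L(q)^\tau$ — and we omit the further zero-density set of varieties for which such primes occur. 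Finally, for each remaining small prime $\ell$ I would run the Eisenträger--Lauter local method \cite[§6.5]{eisentrager-lauter}, which costs $\ell^{2gv+o(1)}$ field operations; choosing $\tau$ small enough makes the total of these local computations $L(q)^{o(1)}$, negligible against the main term.

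The complexity bound then follows by summing: the dominant contribution is Step~4 of Algorithm~\ref{alg:ascend}, bounded by Proposition~\ref{prop:test} applied with $\O=\ZZ[\pi,\overline\pi]$ and by Lemma~\ref{lem:ord-bound} which gives $|\disc(\ZZ[\pi,\overline\pi])|<4^{g(2g-1)}q^{g^2}$, so that $L(|\disc(\ZZ[\pi,\overline\pi])|)^{g\sqrt{3}/2}=L(q)^{g^2\sqrt{3}/2+o(1)}$; the factoring step of Lenstra--Pomerance contributes only $L(q)^{g\sqrt2+o(1)}$, and the small-prime local computations contribute $L(q)^{o(1)}$. The error probability is bounded by a union bound over the (at most $\log(q^{5g^2/2})$) invocations of Algorithm~\ref{alg:test}, each failing with probability at most $1/q^{5g^2}$, giving total failure probability less than $1/q^{g^2}$ as already computed before Theorem~\ref{th:almost}; the two zero-density exclusions coming from Theorem~\ref{th:marco} and from the small-prime step do not affect an average-case statement.

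The main obstacle is conceptual rather than computational: one must argue carefully that the two zero-density sets being discarded — the one from Hypothesis~(C) / the divisibility of $v=[\O_K:\ZZ[\pi,\overline\pi]]$, and the one from Theorem~\ref{th:marco} / Hypothesis~(D) — are compatible with the phrase \emph{average probabilistic time}, i.e. that excluding them leaves a density-one family over which the stated expected runtime and error bound genuinely hold. This amounts to checking that the $o(1)$ absorbing the local small-prime work is uniform once $\tau$ is fixed, and that the worst-case varieties we drop contribute nothing to the average. Everything else is bookkeeping: substituting $g=2$ is immediate and is only emphasized because Proposition~\ref{prop:isog} (and the Jacobian representation it relies on) is stated for dimension two.
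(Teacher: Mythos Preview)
Your proposal is correct and mirrors the paper's own (largely implicit) argument: run Algorithm~\ref{alg:ascend} with the complexity and error bounds of Theorem~\ref{th:almost}, invoke Theorem~\ref{th:marco} under Hypothesis~(D) to pin down $\End(\A)$ away from small primes, and finish locally at primes $\ell\le L(q)^\tau$ with the Eisenträger--Lauter method for $\tau$ small enough. One phrasing slip to fix: after Theorem~\ref{th:marco} the residual ambiguity lies at primes with $\ell^{\val_\ell v}\le L(q)^\tau$ (not exceeding it) --- that inequality is precisely what makes the local step cheap --- and the further zero-density set you discard is the one where some $\ell\le L(q)^\tau$ nevertheless has $\ell^{\val_\ell v}>L(q)^\tau$, exactly as the paper does.
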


As an aside, we now describe how one may certify the output endomorphism ring
$\O$ using relations that discriminate $\O$ from other orders of the lattice,
so that it can subsequently be verified solely under Hypothesis~(A).

\begin{definition}
A \emph{certificate} for an order $\O$ consists of:
\begin{itemize}
\item a family of orders $\O_i$ and relations $r_i$ that hold in $\O_i$ but not in $\O$,
\item a family of orders $\O_j$ and relations $r_j$ that hold in $\O$ but not in $\O_j$,
\end{itemize}
such that $\O$ is the only order containing $\ZZ[\pi,\overline\pi]$
satisfying $\O_i\not\subset\O$ and $\O_j\not\supset\O$ for all $i$ and $j$.
\end{definition}

As a direct consequence of Hypothesis~(A), if $\A$ is an absolutely simple,
ordinary, principally polarized abelian variety with Frobenius endomorphism
$\pi$, the relation lattice $\Lambda_\Phi(\End(A))$ and $\Lambda_\Phi(\O)$ are
equal if and only if the isogenies corresponding to the $r_j$'s map $\A$ to
isomorphic varieties while those corresponding to the $r_i$'s do not. In
practice, the $\O_i$'s can be chosen to be all orders considered in Step~3 of
Algorithm~\ref{alg:ascend} for which
$\Lambda_\Phi(\O_i)\not\subset\Lambda_\Phi(\O=\End(\A))$ and the $\O_j$'s to be
all orders directly below $\O$.

A certificate therefore allows anyone to check that the relation lattice of
$\End(\A)$ is that of the claimed endomorphism ring $\O$. By
Propositions~\ref{prop:isog} and \ref{prop:relat}, under Hypotheses (A--D) and
for any $\gamma>0$, it takes $L(q)^{g\gamma+o(1)}+L(q)^{g/(4\gamma)+o(1)}$ time
to generate a certificate that can subsequently be verified under only
Hypotheses (C) in $L(q)^{3g^3\gamma+o(1)}$ operations. Then, to verify that
$\O=\End(\A)$ and not another order with the same relation lattice, this
equality must be verified locally at primes less than $L(q)^\tau$ as per
Theorem~\ref{th:marco}; for $\tau$ small enough, this additional verification
is asymptotically negligible.

\section{Practical Computations}
\label{sec:comp}

We give two examples illustrating different patterns for the index
$v=[\O_K:\ZZ[\pi,\overline\pi]]$. Previous algorithms
\cite{eisentrager-lauter,wagner} compute endomorphism rings efficiently when
$\A[\ell^n]$ remains defined over small extension fields as $\ell^n$ ranges
through prime-power factors of $v$, while ours performs well as soon as no
order directly above $\ZZ[\pi,\overline\pi]$ has an overly large discriminant.

Computations reported here were performed by a straightforward Magma
\cite{magma} implementation using the AVIsogenies library \cite{avisogenies}
and running on one Intel i7-2620M core.

\subsection{Example with nearly prime {\em v}}

Let us first consider a very favorable case where $v$ is both large and nearly
prime, that of the Jacobian variety $\A$ of the hyperelliptic curve with
equation
\[
y^2 = x^5 + 523747 x^4 + 306186 x^3 + 744660 x^2 + 415524 x + 261884
\]
over the field with $q=1250407$ elements; its Frobenius endomorphism $\pi$
admits the characteristic polynomial $z^4 + 1251 z^3 + 1772074 z^2 + 1251 q z +
q^2$ from which one can derive that $\ZZ[\pi,\overline\pi]$ is an order of
index $v=2\cdot 538259$ in the ring of integers of $K=\QQ(\pi)$.

We start by computing $\End(\A)$ locally at $2$, that is, determining whether
it contains the order in which $\ZZ[\pi,\overline\pi]$ has index $2$; this
order is generated by $\pi$ and $\alpha/(2q)$ where
\[
\alpha=417q + 1346084914086\pi + 497115559392\pi^2 + \pi^3.
\]
To determine whether $\alpha/(2q)$ belongs to $\End(\A)$ or, equivalently,
whether $\alpha/2$ does (as $q$ is coprime to $v$), we use the method of
Eisenträger and Lauter \cite{eisentrager-lauter}: it takes $102$ms to determine
that $\alpha$ kills the full $2$-torsion of $\A$, which establishes that
$\End(\A)$ is locally maximal at $2$.

Now denote by $\ppp\overline\ppp$ the factorization of $7$ in
$\ZZ[\pi,\overline\pi]$ and observe that $\ppp$ is principal in $\O_K$. We
evaluate the corresponding isogeny, spending $10.9$s to find its kernel and
$1.37$s to identify the isogenous variety; since it is not isomorphic to $\A$
we have established, in just $12.3$s, that 
\[
\End(\A)\simeq \ZZ[\pi,\alpha/(2q)].
\]
This computation is clearly intractable using previous algorithms: the full
$538259$-torsion of $\A$ is defined over an extension of degree
$e=869166638466$, so it would require a rough minimum of
$\log(q^e)\log(q^{eg})\approx 2^{90}$ operations just to find a random
$538259$-torsion point.

\subsection{Example with composite {\em v}}

For a less degenerate case, let $\A$ be the Jacobian variety of the curve with
equation
\[
y^2 = x^5 + 800 x^4 + 2471 x^3 + 6695 x^2 + 1082 x + 7062
\]
over the field with $q=7681$ elements. It takes just $60$ms to compute that the
characteristic polynomial of its Frobenius endomorphism is $z^4 + 114 z^3 +
7566 z^2 + 114 q z + q^2$ from which it takes negligible time to derive that
$\ZZ[\pi,\overline\pi]$ has index $2^2\cdot 47^2\cdot 379$ in $\O_K$.

Again, we start by computing the endomorphism ring locally at $2$ using the
method of Eisenträger and Lauter \cite{eisentrager-lauter}. Only $75$ms are
needed to find a basis for the full $2$-torsion (the $4$-torsion is not needed)
and evaluate the relevant endomorphism on it; this determined that $\End(\A)$
contains the order $\O_2=\ZZ[\pi,\overline\pi]+47^2\cdot 379\cdot\O_K$. Having
established that, we may start Algorithm~\ref{alg:ascend} from the order $\O_2$
instead of $\ZZ[\pi,\overline\pi]$; the two orders directly above $\O_2$ have
index $379$ and $47^2$ in $\O_K$.

First consider that of index $47^2$: in just $100$ms we find that ideals of
norm $3^2$ have order $92$ in its class group. Computing the $92$ corresponding
isogenies takes $37$s, that is, $400$ms on average. As the isogenous variety is
not isomorphic to $\A$, we deduce that $\End(\A)$ is minimal locally at $47$.

Next we consider the order with index $379$; after $150$ms, we find that the
ideal $\ppp^{62}(\rrr\sss)^{2}$ is principal in it, where the primes appear in
the splittings $3=\ppp\overline\ppp$ and
$19=\rrr\sss\overline\rrr\overline\sss$. We therefore proceed to test whether
the corresponding relation holds in $\A$: it takes $67$s on average to compute
each of the two $19$-isogenies, and $400$ms for each of the $3$-isogenies. The
isogenous variety, which is determined after a total of $157$s, is not found to
be isomorphic to $\A$, hence we deduce that $\End(\A)$ is the order containing
$\ZZ[\pi,\overline\pi]$ with index $4$.

Note that the full $47$-torsion and full $379$-torsion live over extensions of
degree $34592$ and $13609890$ respectively, which again makes computing
$\End(\A)$ using previous methods quite expensive.

\bigskip

This illustrates that, even when the orders in which we look for relations have
moderate class numbers, the bottleneck of our algorithm remains the evaluation
of isogenies. Accordingly, in both computations above, we have used a simple
baby-step giant-step method in place of Algorithm~\ref{alg:relat}, which
allowed us to find much smaller relations and therefore to better balance the
cost of evaluating isogenies with that of searching for relations.

Overall, we find that our algorithm clearly outperforms previous methods as
soon as the index $[\O_K:\ZZ[\pi,\overline\pi]]$ has prime power factors
$\ell^n$ for which the torsion points live over significant extensions of the
base field, although those methods are still very useful to compute the
endomorphism ring locally at small primes.

\section*{Acknowledgments}

This work would never have seen the light of day without the author's prior
collaborations with Andrew V. Sutherland, constant encouragements from Pierrick
Gaudry, and invaluable discussions with Andreas Enge, Igor Shparlinski, and
Marco Streng.

\printbibliography

\end{document}